\newtheorem{definition}{Definition}[section]
\newtheorem{theorem}{Theorem}[section]
\newtheorem{proposition}{Proposition}[section]
\newtheorem{lemma}{Lemma}[section]
\newtheorem{corollary}{Corollary}[section]
\newtheorem{remark}{Remark}[section]
\newcommand{\R}{\mathbb{R}}
\newcommand{\ity}{\infty}
\begin{document}
\title[Weighted semilinear damped wave equations]{Critical exponent for semilinear damped wave equations with weighted
nonlinear terms and data from Sobolev spaces of negative
order}

\subjclass{35A01, 35B33, 35B44, 35L15, 35L71}
\keywords{Weighted semilinear damped wave equation, Critical
exponent, Sobolev spaces of negative order, Lifespan estimates}
\thanks{$^* $\textit{Corresponding author:} Tuan Anh Dao (anh.daotuan@hust.edu.vn)}

\maketitle

\centerline{\scshape \textbf{Dinh Van Duong}}
{\footnotesize
	\centerline{Faculty of Mathematics and Informatics, Hanoi University of Science and Technology}
	\centerline{No.1 Dai Co Viet road, Hanoi, Vietnam}
	\centerline{Email: vanmath2002@gmail.com}}
\medskip

\centerline{\scshape \textbf{Tuan Anh Dao}}
{\footnotesize
	\centerline{Faculty of Mathematics and Informatics, Hanoi University of Science and Technology}
	\centerline{No.1 Dai Co Viet road, Hanoi, Vietnam}
	\centerline{Email: anh.daotuan@hust.edu.vn}}

% \linenumbers

\begin{abstract}
   In this paper, we would like to study the critical exponent for semilinear damped wave equations with the nonlinearity terms of Coulomb-type singularities $|x|^{-\alpha} |u(t,x)|^p$  and the initial data belonging to Sobolev spaces of negative order $\dot{H}^{-\beta}$. Precisely, we obtain a critical exponent
$$p_{\rm c}(\alpha,\beta,n): = 1 + \frac{4-2\alpha}{n+2\beta} $$
for $1 \leq n \leq 4$ and $ 0 \leq \alpha, \beta < n/2,$ by proving the global (in time) existence of small data solutions when $p \geq p_{\rm c}(\alpha,\beta,n)$ and the blow-up result for weak solutions in finite time even for small data if $1 < p < p_{\rm c}(\alpha,\beta,n)$. Furthermore, we are going to provide lifespan estimates for solutions when a blow-up phenomenon occurs.
\end{abstract}

\tableofcontents

%=================================================================================={Introduction}	
\section{Introduction}
Let us consider the following Cauchy problem for semilinear damped wave equations:
\begin{equation} \label{Main.Eq.1}
\begin{cases}
u_{tt}(t,x) -\Delta u (t,x) + u_t(t,x)= \mathcal{N}(u(t,x)), &\quad x\in \R^n,\, t > 0, \\
u(0,x)= \varepsilon u_0(x), \,\, u_t(0,x) = \varepsilon u_1(x), &\quad x \in \mathbb{R}^n, \\
\end{cases}
\end{equation}
where the nonlinear terms $\mathcal{N}(u(t,x)) = |x|^{-\alpha}|u(t,x)|^p$ are referred to as weighted power-type nonlinearities, exhibiting Coulomb-type singularities at the origin.
Additionally, we consider the initial data $$(u_0, u_1) \in \dot{H}^{-\beta}(\mathbb{R}^n) \times \dot{H}^{-\beta}(\mathbb{R}^n)$$ with $\beta \geq 0$ and the positive constant $\varepsilon$
describes its size. For $r \in (1, \infty)$ and $\beta \in \mathbb{R}$, the Sobolev spaces $\dot{H}_r^{-\beta}$ are defined by (see, for instance, \cite[section 1.3.3]{Garafakos2014}) 
\begin{align*}
    \dot{H}_r^{-\beta} := \bigg\{\varphi: \varphi \in \mathcal{Z}' \text{ satisfies } \|\varphi\|_{\dot{H}_r^{-\beta}} := \big\||\nabla|^{-\beta} \varphi\big\|_{L^r} = \big\|\frak{F}^{-1}(|\xi|^{-\beta} \widehat{\varphi}(\xi))(t,\cdot)\big\|_{L^r} < +\infty\bigg\}.
\end{align*}
Here $\mathcal{Z}'$ stands for the topological dual space to the subspace of the Schwartz space $\mathcal{S}$ consisting of functions with $\partial_{\xi}^{\gamma} \hat{\varphi}(0) = 0$ for all multi-indices $\gamma$, namely, $\mathcal{Z}'$  is the
factor space $\mathcal{S}'/\mathcal{P}$, where $\mathcal{P}$ is the space of all polynomials. Moreover, the spaces $\dot{H}_r^{-\beta}$ are called the Sobolev spaces of negative order for $\beta > 0$. We recall the other special types of nonlinearities that are singular at the origin. The case $n=3$ and $\mathcal{N}(u(t,x))= -|x|^{-1} u(t,x) $  is known as the Coulomb potential. The case $n\geq 1$ and $\mathcal{N}(u(t,x)) = |x|^{-2} u(t,x)$ is known as the square-inverse potential (see \cite{Baras1984} for heat equations, \cite{Brezis2005} for elliptic equations, and \cite{Burq2003} for Schr\"odinger equations). The nonlinear terms of the form $\mathcal{N}(u(t,x)) = |x|^{-\alpha} u(t,x)|u(t,x)|^p$ are
also considered for elliptic equations and $p$-Laplace equations (see \cite{Garcia1998} and the
references therein). 

Next, the linear problem corresponding to (\ref{Main.Eq.1}) we have in mind is
\begin{equation} \label{Main.Eq.2}
\begin{cases}
u_{tt}(t,x)-\Delta u(t,x)+ u_t(t,x)= 0, &\quad x\in \mathbb{R}^n,\, t > 0, \\
u(0,x) = \varepsilon u_0(x), \, u_t(0,x) = \varepsilon u_1(x) &\quad x\in \mathbb{R}^n.
\end{cases}
\end{equation}
The author in \cite{Matsumura1976} was the first to establish some basic decay estimates for solutions to (\ref{Main.Eq.2}). Subsequently, it has been established that the damped wave equation has a diffusive structure as $t \to \infty$. Since then, many papers have studied sharp $L^m-L^q$ estimates, with $1 \leq m \leq q \leq \infty$, for solutions to (\ref{Main.Eq.2}), for example, \cite{Nishihara2003, DabbiccoEbert2017, Ikeda2019} and the references cited therein.  From this, they conclude that the diffusion phenomenon bridges the
decay properties of solution to the classical damped wave equations (\ref{Main.Eq.2})
and solution to the heat equations.
\medskip

Next, we consider the following semilinear damped wave equations with power nonlinearities:
\begin{equation} \label{Main.Eq.3}
\begin{cases}
u_{tt}(t,x) -\Delta u (t,x) + u_t(t,x)= |u(t,x)|^p, &\quad x\in \R^n,\, t > 0, \\
u(0,x)= \varepsilon u_0(x), \,\, u_t(0,x) = \varepsilon u_1(x), &\quad x \in \mathbb{R}^n, \\
\end{cases}
\end{equation}
where $p > 1$. The papers \cite{TodorovaYordanov2001, IkehataMiyaokaNakatake2004, IkehataTanizawa2005} are obtained a global (in time) weak solution to (\ref{Main.Eq.3}) when the exponent $p$ satisfies
  $
          1+ 2/n < p < +\infty \,\,\text{ if } n = 1,2 \text{ and }
          1 +2/n < p \leq n/(n-2) \,\,\text{ if } n \geq 3,
 $
  by using weighted
  energy methods and assuming the initial data $(u_0,u_1) \in (H^1 \cap L^1)\times (L^2 \cap L^1).$ On the other hand, the nonexistence of general global (in time) small data solutions is proved in \cite{TodorovaYordanov2001} for $1 < p < 1 +2/n$ and in \cite{Zhang2001} for $p = 1+2/n$. Therefore, the quantity
  $$p_{\rm Fuj}(n) := 1+\frac{2}{n} $$ 
 is called the critical exponent for (\ref{Main.Eq.3}), well-known as the Fujita exponent (see more \cite{Palmieri2020, Palmieri2021, GeogievPalmieri2020} for the semilinear damped wave equation in non-Euclidean frameworks). 
Here, the critical exponent is understood as a threshold
between the global (in time) existence of small data weak solutions and the blow-up of solutions even for small data. The diffusion phenomenon connecting the heat equation and the classical damped wave equation (see \cite{Narazaki2004, Nishihara2003}) sheds light on the parabolic nature of classical damped wave models with respect to decay estimates for solutions. Additionally, to derive the critical regularity of nonlinear terms for the semilinear damped wave equations, the paper \cite{Ebert2020}, followed by \cite{Girardi2025,TangDuong2026}, considered the problem (\ref{Main.Eq.3}) with
nonlinearities $\mathcal{N}(u(t,x)) = |u(t,x)|^{p_{\rm Fuj}(n)}\mu(|u(t,x)|)$, where $\mu$ stands for a suitable
modulus of continuity. Moreover, according to the works \cite{LiZhou1995, IkedaOgawa2016, LaiZhou2019}, the sharp lifespan estimates for blow-up solutions to (\ref{Main.Eq.3}) when $1< p \leq p_{\rm Fuj}(n)$ in all spatial dimensions have been investigated. Consequently, they provided the following sharp lifespan estimates for blow-up solutions:
\begin{align*}
    T(\varepsilon) \sim
    \varepsilon^{-\frac{2(p-1)}{2-n(p-1)}} \text{ if } 1 < p < p_{\rm Fuj}(n) \,\,\text{ and } \,\,\log(T(\varepsilon)) \sim 
        \varepsilon^{-\frac{2}{n}} &\text{ if } p = p_{\rm Fuj}(n). 
\end{align*}

Under additional regularity $L^m$ for the initial data, with $m \in [1,2]$, the papers \cite{IkehataOhta2002, DabbiccoEbert2017} and references therein identified the critical exponent for (\ref{Main.Eq.3}) as
$$ p_{\rm Fuj}\left(\frac{n}{m}\right) =  1 + \frac{2m}{n}.$$ However, the authors did not provide conclusions regarding the solutions' properties when $p = p_{\rm Fuj}(n/m)$. The paper \cite{Ikeda2019} proved
the existence of global mild solutions in this critical case for all $m \in (1,2]$ in low dimensional spaces along with some additional conditions.

Continuing an extension of the initial data space $  \dot{H}^{-\beta} \times \dot{H}^{-\beta}$, with $\beta \geq 0$, the authors in \cite{ChenReissig2023} determined that the critical exponent for (\ref{Main.Eq.3}) is 
        \begin{align*}
            p_{\rm c}(0,\beta,n) := p_{\rm Fuj}\left(\frac{n}{2}+\beta\right) = 1+ \frac{4}{n+2\beta}, 
        \end{align*}
by proving the global existence with small data when $p > p_{\rm c}(0,\beta,n)$ and the blow-up solution when $1< p < p_{\rm c}(0,\beta,n)$. In non-Euclidean frameworks, the papers \cite{Dasgupta2024} also found the critical exponent
        $p_{\rm c}(0,\beta, \mathcal{Q}) =  1+ 4/(\mathcal{Q}+2\beta),$ 
with the homogeneous dimensions $\mathcal{Q}=2n+2$ of the Heisenberg group $\mathbb{H}_n$. The paper \cite{Dabbicco2024} demonstrated that the values $p=p_c(0, \beta, n)$ and $p=p_c(0, \beta, \mathcal{Q})$, corresponding to Euclidean space and Heisenberg group cases respectively, belong to the global existence range.  Finally, the reader can refer to a recent preprint \cite{TangDuongPhan2026} concerning the problem (\ref{Main.Eq.3}) with the initial data belonging to pseudo-measure spaces. \medskip

Our main goal is to study (\ref{Main.Eq.1}) for the nonlinearity terms $|x|^{-\alpha} |u(t,x)|^p$  and the initial data belonging to $\dot{H}^{-\beta}$. Specifically, we determine the critical exponent for (\ref{Main.Eq.1}) as follows:

        \begin{align}
            p_{\rm c}(\alpha,\beta,n) := p_{\rm Fuj}\left(\frac{n+2\beta}{2-\alpha}\right) = 1 + \frac{4-2\alpha}{n+2\beta} \label{CriticalExponent}
        \end{align}
for all $1\leq n \leq 4$ and $0 \leq \alpha, \beta < n/2$, that is, the weight $|x|^{-\alpha}$ shifts the critical exponent to the left of $p_c(0, \beta, n)$. In the case $\beta = 0$, the recent paper \cite{NakamuraWadade2019} established the existence of global solutions to (\ref{Main.Eq.1}) for $p \geq p_c(\alpha, 0,n)$, $n=1,2$ and local solutions for $p>1$, $n \geq 1$. Therefore, our results extended those obtained in \cite{ChenReissig2023, Dabbicco2024, NakamuraWadade2019}. 
When there is no singularity $(\alpha = 0)$,  the power type nonlinear terms $|u(t,x)|^p$ could be handled by the standard Sobolev
embeddings $H^1(\mathbb{R}^n)  \hookrightarrow L^r(\mathbb{R}^n)$ for $\max\{0, 1/2-1/n\} \leq 1/r \leq 1/2$ with $(n, r) \ne (2, \infty)$.  When there is the singularity $(\alpha > 0)$, we will use the Caffarelli-Kohn-Nirenberg
inequality (see Proposition \ref{CKN_inquality}) to control its decay rate. Finally, the estimates for lifespan of blow-up solutions to (\ref{Main.Eq.1}) are also addressed in this work. Namely, we denote by $T_{\alpha, \beta}(\varepsilon)$ the so-called lifespan of a local (in time) solution, i.e. its maximal existence time. Then, $T_{\alpha, \beta} \sim +\infty$ when $p \geq p_c(\alpha, \beta, n)$ and 
        $$\varepsilon^{-\frac{4(p-1)}{4-2\alpha-(n+2\beta)(p-1)}} \lesssim T_{\alpha, \beta}(\varepsilon) \lesssim \varepsilon^{-\frac{4(p-1)}{4-2\alpha-(n+2\beta)(p-1)}} (\log(\varepsilon^{-1}))^{\frac{4(p-1)}{4-2\alpha-(n+2\beta)(p-1)}},$$
when $1 < p < p_c(\alpha, \beta,n)$.
Therefore, one can observe that the weight $|x|^{-\alpha}$ contributes to increasing the lifespan of blow-up solutions.

\vspace{0.2cm}
\textbf{Notations:} Throughout this paper, we are going to use the followings:

\begin{itemize}[leftmargin=*]
\item We write $f\lesssim g$ when there exists a constant $C>0$ such that $f\le Cg$, and $f \sim g$ when $g\lesssim f\lesssim g$.

\item We denote $\widehat{w}(t,\xi):= \mathfrak{F}_{x\rightarrow \xi}\big(w(t,x)\big)$ as the Fourier transform with respect to the spatial variable of a function $w(t,x)$. Moreover, $\mathfrak{F}^{-1}$ represents the inverse Fourier transform.

\item We put $\langle x\rangle:= \sqrt{1+|x|^2}$, the so-called Japanese bracket of $x \in \R^n$.

\item As usual, $H^{a}_r$ and $\dot{H}^{a}_r$, with $r \in (1, \infty), a \in \mathbb{R}$, denote potential spaces based on $L^r$ spaces. Here $\langle\nabla\rangle^{a}$ and $|\nabla|^{a}$ stand for the differential operators with symbols $\big<\xi\big>^{a}$ and $|\xi|^{a}$, respectively. In addition, for $r=2$, we write $H^a$ and $\dot{H}^a$ instead of $H_2^a$ and $\dot{H}_2^a$, respectively.
\end{itemize}
\medskip

\textbf{Main results:} Let us state the global (in time) existence of small data solutions, along with the blow-up result, which will be proved in this paper.
    \begin{theorem}[\textbf{Global existence}]\label{Theorem1}
        Let $1 \leq n \leq 4$ and $0 \leq \alpha, \beta < n/2$. We assume that the exponent $p$ satisfies  
        \begin{align}
            p \geq p_c(\alpha, \beta, n). \label{critical_exponent}
        \end{align}
In addition, we also suppose the following condition when $n=3,4$:
        \begin{align}
              1 + \displaystyle\frac{2(\beta-\alpha)}{n} < p \leq \displaystyle\frac{n-2\alpha}{n-2} \quad\text{ for }\quad \,\, 2\alpha + (n-2)\beta < n.
           \label{CKN_condition}
         \end{align}
        Moreover, the initial data satisfies
            $$(u_0, u_1) \in \mathcal{D}(\beta) := (H^1 \cap \dot{H}^{-\beta})\times (L^2 \cap \dot{H}^{-\beta}).$$
       Then, there exists a constant $\varepsilon_0 > 0$ such that for any $\varepsilon \in (0, \varepsilon_0]$, the Cauchy problem \eqref{Main.Eq.1} admits a unique global (in time) solution 
        \begin{align*}
            u \in \mathcal{C}([0, \infty), H^1).
        \end{align*}
      Furthermore, the following estimates hold for all $t > 0$:
        \begin{align*}
            \|u(t,\cdot)\|_{L^2} &\lesssim \varepsilon (1+t)^{-\frac{\beta}{2}} \|(u_0,u_1)\|_{\mathcal{D}(\beta)},\\
            \|\nabla u(t,\cdot)\|_{L^2} &\lesssim \varepsilon (1+t)^{-\frac{1+\beta}{2}} \|(u_0,u_1)\|_{\mathcal{D}(\beta)}.
        \end{align*}
    \end{theorem}

\begin{remark}\label{Remark1.1}
\fontshape{n}
\selectfont
    We now discuss the assumptions appearing in Theorem \ref{Theorem1}. More specifically, the conditions (\ref{CKN_condition}) and $0 \leq \alpha < n/2$ arise from the application of Proposition \ref{CKN_inquality}. Furthermore,  we note that 
    \begin{align}
    p_c(\alpha, \beta,n) > p^*(\alpha, \beta, n) := 1+\frac{2(\beta-\alpha)}{n} \quad\text{ if and only if }\quad  2n-n\beta +2\alpha\beta -  2\beta^2 > 0. \label{Relation1}
    \end{align}
    Therefore, the assumption $p > p^*(\alpha, \beta,n)$ is automatically satisfied when $n=1,2$. For $n=3,4$, we also have the following relation:
    \begin{align}
        p_c(\alpha, \beta,n) \leq \frac{n-2\alpha}{n-2} \quad \text{ if and only if }\quad 4-n+2\beta-2\alpha -2\alpha\beta \geq 0. \label{Relation2}
    \end{align}
    Finally, the condition $0 \leq \beta < n/2$ appears from the use of Proposition \ref{HLS_inequality}.
\end{remark}

 \begin{remark}
    \fontshape{n}
\selectfont
        Let us give some examples for the admissible range of exponents $p$ for the global (in time) existence result.
        \begin{itemize}[leftmargin=*]
            \item When $n = 1,2$ and $0 \leq \alpha, \beta < n/2$, one can see that the exponent $p$ satisfies
            \begin{align*}
               p_{\rm c}(\alpha,\beta,n) \leq p < +\infty.
            \end{align*}
         \item When $n = 3$, using the relations (\ref{Relation1}) and (\ref{Relation2}), we have the exponent $p$ satisfies
            \begin{align}
               p_{\rm c}(\alpha,\beta,3) \leq p \leq 3-2\alpha 
                \quad\text{ if }\quad 
               \begin{cases}
               0 \leq \alpha, \beta < 3/2 ,\\
               1+2\beta-2\alpha-2\alpha\beta \geq 0,\\
               6-3\beta+2\alpha\beta-2\beta^2 > 0
           \end{cases}\label{System1}
            \end{align}
        and 
        \begin{align}
            1+\frac{2(\beta-\alpha)}{3} < p \leq 3-2\alpha  \quad\text{ if }\quad 
               \begin{cases}
               0 \leq \alpha, \beta < 3/2 ,\\
               2\alpha+\beta < 3,\\
               6-3\beta+2\alpha\beta-2\beta^2 \leq 0.
           \end{cases} \label{System2}
        \end{align}
        We can observe that $\alpha \in [0, 1/2), \, \beta = 0$ is a pair satisfying system (\ref{System1}) and $\alpha \in [0, 7/20], \beta = 5/4 $ is a pair satisfying system (\ref{System2}).
        \item When $n=4$, applying again the relation (\ref{Relation1}) and (\ref{Relation2}), we have the exponent $p$ satisfies
        \begin{align}
               p_{\rm c}(\alpha,\beta,4) \leq p \leq 2-\alpha 
                \quad\text{ if }\quad 
               \begin{cases}
               0 \leq \alpha, \beta < 2 ,\\
               \beta-\alpha\beta-\alpha \geq 0,\\
               4-2\beta+\alpha\beta-\beta^2 > 0
           \end{cases}\label{System3}
            \end{align}
        and 
        \begin{align}
            1+\frac{\beta-\alpha}{2} < p \leq 2-\alpha  \quad\text{ if }\quad 
               \begin{cases}
               0 \leq \alpha, \beta < 2 ,\\
               \alpha+\beta < 2,\\
               4-2\beta+\alpha\beta-\beta^2 \leq 0.
        \end{cases}\label{System4}
        \end{align}
        Finally, one can see that $\alpha = 1/2, \, \beta \in [1, (-3+\sqrt{73})/4)$ is a pair satisfying system (\ref{System3}) and $\alpha \in [0, 1/2), \beta = 3/2 $ is a pair satisfying system (\ref{System4}).
        \end{itemize}
    \end{remark}

    \begin{theorem}[\textbf{Blow-up}]\label{Theorem2}
        Let $n \geq 1, \,\, 0 \leq \alpha < 2$ and \,$0 \leq  \beta < n/2$. Consider that $(u_0, u_1) \in \dot{H}^{-\beta} \times \dot{H}^{-\beta} $ satisfies
        \begin{align}\label{condition2.1}
         u_0(x) + u_1(x) \gtrsim \langle x \rangle^{-n(\frac{1}{2}+\frac{\beta}{n})} \log(e+|x|)^{-1}.  
        \end{align}
        Moreover, the exponent $p$ satisfies the following condition:
        \begin{align}\label{condition2.2}
            1 < p < p_{\rm c}(\alpha,\beta,n).
        \end{align}
        Then, there is no global weak solution to \eqref{Main.Eq.1} in $\mathcal{C}([0, \infty), L^2)$.
    \end{theorem}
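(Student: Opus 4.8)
The plan is to run a rescaled test-function (Kato-type capacity) argument adapted to the parabolic scaling $t\sim|x|^2$ that is forced by the diffusion structure of the damped wave operator, sending the localisation scale $R\to\infty$; the point is that the slow, \emph{non-integrable} lower bound (\ref{condition2.1}) on $u_0+u_1$ is exactly what pushes the blow-up range down from $p<p_F(n)$ to $1<p<p_c(m,\gamma)$. First I would fix the notion of weak solution: assuming towards a contradiction that a global weak solution $u\in\mathcal{C}([0,\infty),L^2)$ exists, it satisfies $u\in L^p_{\mathrm{loc}}([0,\infty)\times\R^n)$ and, for every $\Phi\in\mathcal{C}_0^\infty([0,\infty)\times\R^n)$ with $\partial_t\Phi(0,\cdot)\equiv0$,
\[
\int_0^\infty\!\!\intRn|u|^p\,\Phi\,\rmd x\,\rmd t+\varepsilon\intRn(u_0+u_1)\,\Phi(0,x)\,\rmd x=\int_0^\infty\!\!\intRn u\,\big(\partial_t^2\Phi-\Delta\Phi-\partial_t\Phi\big)\,\rmd x\,\rmd t,
\]
an identity obtained from (\ref{Main.Eq.1}) by integration by parts: the boundary terms at $t=0$ produce the data term, while the contribution carrying $u_0$ alone is killed by the condition $\partial_t\Phi(0,\cdot)\equiv0$.

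Next, for $R\ge1$ I would pick cut-offs $\eta\in\mathcal{C}_0^\infty([0,\infty))$, $\phi\in\mathcal{C}_0^\infty(\R^n)$ with $0\le\eta,\phi\le1$, $\eta\equiv1$ on $[0,1/2]$, $\operatorname{supp}\eta\subset[0,1]$, $\phi\equiv1$ on $\{|x|\le1/2\}$, $\operatorname{supp}\phi\subset\{|x|\le1\}$, and set $\psi_R(t,x):=\eta(t/R^2)\phi(x/R)$, $\Phi_R:=\psi_R^{\ell}$ with $\ell:=\lceil 2p'\rceil$ and $p':=p/(p-1)$. This parabolic rescaling has two effects: on the support $\Omega_R\subset\{t\le R^2,\,|x|\le R\}$ of the derivatives of $\Phi_R$, whose measure is $\lesssim R^{n+2}$, the heat-type terms dominate so that $|\partial_t^2\Phi_R-\Delta\Phi_R-\partial_t\Phi_R|\lesssim R^{-2}$ (the wave term $\partial_t^2\Phi_R\sim R^{-4}$ is negligible), and the power $\ell\ge2p'$ ensures $|\partial_t^2\Phi_R-\Delta\Phi_R-\partial_t\Phi_R|^{p'}\Phi_R^{-(p'-1)}\lesssim R^{-2p'}\mathbf 1_{\Omega_R}$. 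On the other hand $\Phi_R(0,x)=\phi(x/R)^{\ell}\ge0$ equals $1$ on $\{|x|\le R/2\}$, so the lower bound (\ref{condition2.1}) together with a polar-coordinate computation gives, for $R$ large,
\[
\varepsilon\intRn(u_0+u_1)\,\Phi_R(0,x)\,\rmd x\gtrsim\varepsilon\int_{|x|\le R/2}\langle x\rangle^{-(\frac{n}{m}+\gamma)}\log(e+|x|)^{-1}\,\rmd x\gtrsim\varepsilon\,\frac{R^{\,n-\frac{n}{m}-\gamma}}{\log R},
\]
where $n-\frac{n}{m}-\gamma>0$ precisely because $\gamma<\frac{n(m-1)}{m}$. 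This is the step where the slow decay of the data enters and effectively replaces the dimension $n$ by $\frac{n}{m}+\gamma$.

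Writing $I_R:=\int_0^\infty\!\intRn|u|^p\Phi_R$, Hölder's inequality on $\Omega_R$ with exponents $p,p'$ yields
\[
\Big|\int_0^\infty\!\!\intRn u\,\big(\partial_t^2\Phi_R-\Delta\Phi_R-\partial_t\Phi_R\big)\Big|\le I_R^{1/p}\Big(\int\!\!\int_{\Omega_R}\frac{|\partial_t^2\Phi_R-\Delta\Phi_R-\partial_t\Phi_R|^{p'}}{\Phi_R^{p'-1}}\Big)^{1/p'}\lesssim I_R^{1/p}\,R^{\,-2+\frac{n+2}{p'}}.
\]
Feeding this and the previous lower bound into the weak identity and absorbing $I_R^{1/p}R^{-2+(n+2)/p'}\le\tfrac12 I_R+CR^{\,n+2-2p'}$ by Young's inequality gives $\tfrac12 I_R+\varepsilon c_0\,R^{\,n-\frac{n}{m}-\gamma}(\log R)^{-1}\le C\,R^{\,n+2-2p'}$, hence, dropping $I_R\ge0$, $R^{\,\kappa}\lesssim\varepsilon^{-1}\log R$ with
\[
\kappa:=\Big(n-\tfrac{n}{m}-\gamma\Big)-\big(n+2-2p'\big)=\frac{2}{p-1}-\frac{n}{m}-\gamma=\frac{2-(\frac{n}{m}+\gamma)(p-1)}{p-1},
\]
and $\kappa>0$ is equivalent to the hypothesis $p<p_c(m,\gamma)$ in (\ref{condition2.2}). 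Letting $R\to\infty$ contradicts $R^\kappa\lesssim\varepsilon^{-1}\log R$, which proves the nonexistence claim. Running the same computation with $R\sim T^{1/2}$, under the assumption that the solution exists only on $[0,T]$, moreover furnishes the announced upper bound $T_{m,\gamma}(\varepsilon)\lesssim\varepsilon^{-2/\kappa}(\log\varepsilon^{-1})^{2/\kappa}$.

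I expect the only genuinely delicate points to be: (i) in low dimensions, where $p$ may exceed $2$, the weak formulation must be justified either by restricting to weak solutions with $|u|^p\in L^1_{\mathrm{loc}}$ or by first truncating the nonlinearity and passing to the limit; and (ii) the bookkeeping that the constraint $\gamma<\frac{n(m-1)}{m}$ is indispensable, since it is exactly what keeps $\langle x\rangle^{-(\frac{n}{m}+\gamma)}$ non-integrable and hence what enlarges the blow-up range to $1<p<p_c(m,\gamma)$. The remaining estimates on $\Phi_R$ and $\Omega_R$ are routine scaling computations.
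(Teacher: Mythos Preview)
Your proposal is correct and follows essentially the same route as the paper: a parabolically rescaled test function $\Phi_R(t,x)=\eta(t/R^2)\phi(x/R)$ (the paper assumes directly that $\eta^{-p'/p}(|\eta'|^{p'}+|\eta''|^{p'})$ and $\varphi^{-p'/p}|\Delta\varphi|^{p'}$ are bounded, whereas you achieve the same effect by raising the cut-off to the power $\ell=\lceil 2p'\rceil$), H\"older plus Young on the right-hand side to produce $R^{n+2-2p'}$, and the annular lower bound on the data term giving $\varepsilon R^{n-\frac{n}{m}-\gamma}(\log R)^{-1}$, whose comparison forces the contradiction exactly when $p<p_c(m,\gamma)$. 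The exponent bookkeeping, the role of $\gamma<\frac{n(m-1)}{m}$, and the lifespan upper bound you sketch all match the paper's argument.
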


    \begin{remark}
      \fontshape{n}
\selectfont
       Thanks to the statements of Theorems \ref{Theorem1} and \ref{Theorem2}, under additional regularity $\dot{H}^{-\beta}$ for the initial data and weight $|x|^{-\alpha}$ for the nonlinear term, the critical exponent for (\ref{Main.Eq.1}) is defined by (\ref{CriticalExponent}).  Therefore, both of these factors shift the critical exponent to the left when compared to the exponents $p_c(0, \beta, n)$ or $p_c(\alpha, 0, n)$. For the reader's convenience, the ranges of global (in time) existence and blow-up are illustrated in Figure \ref{fig.zone1} and Figure \ref{fig.zone2}.

%.......................................................................

\begin{figure}[htbp]
    \centering
    % ==================== HÌNH BÊN TRÁI ====================
    \begin{minipage}[b]{0.49\textwidth}
        \centering
        \begin{tikzpicture}[>=latex,xscale=1.1,scale=0.48]
        %=====================================
        \draw[->] (0,0) -- (7.5,0)node[below]{$\beta$};
        \draw[->] (0,0) -- (0,7.5)node[left]{$p$};
        \node[below left] at(0,0){$0$};
        \node[below] at (2.5,1.0) {};

        %=====================================p
        \draw[fill] (3.5,0) circle[radius=1pt];
        \node[below] at (3.5,0) {{\scriptsize $\displaystyle\frac{n}{2}$}};
        \draw[dashed] (3.5,0)--(3.5,7);

        %=====================================q
        \draw[fill] (0,1.5) circle[radius=1pt];
        \node[left] at (0,1.5){{\scriptsize $1$}};
        \draw[dashed] (0,1.5)--(3.5,1.5);

        \draw[fill] (0,4) circle[radius=1pt];
        \node[left] at (0,4){{\scriptsize $1+\displaystyle\frac{4-2\alpha}{n}$}};

        \fill[color=black!10!white] (0,1.5)--(0,4)--(1,3.33333)--(2,2.909090)--(3,2.61538)--(3.5,2.5)--(3.5,1.5)--cycle;
        \fill[color=black!40!white] (0,7)--(0,4)--(1,3.33333)--(2,2.909090)--(3,2.61538)--(3.5,2.5)--(3.5,7)--cycle;

        \draw[domain = 0:3.5, blue,line width=1.0pt]plot(\x,{1+10.5/(3.5+\x)});

        \draw[thin] (0,0)--(0,7.5);
        \draw[thin] (3.5,0)--(3.5,7);
        \draw[thin] (0,1.5)--(3.5,1.5);

        \draw[dashed] (0,2.5)--(3.5,2.5);
        \draw[fill] (0,2.5) circle[radius=1pt];
        \node[left] at (0,2.5){{\scriptsize $1+\displaystyle\frac{2-\alpha}{n}$}};

        \draw[fill] (3.5,2.5) circle[radius=1.5pt];

        %\fill[color=black!10!white] (9.25,8.75)--(8.75,8.75)--(8.75,8.25)--(9.25,8.25)--cycle;
        %\node[right] at (9.4,8.5) {{\footnotesize \text{Blow-up}}};

        \fill[color=black!40!white] (0.25,8.75)--(0.75,8.75)--(0.75,8.25)--(0.25,8.25)--cycle;
        \node[right] at (0.8,8.5) {{\footnotesize \text{Global existence}}};

        \draw[thin, color=blue,line width=1.0pt] (5,4.75)--(5.5,4.75);
        \node[right] at (5.75,4.75) {$p= 1+\frac{4-2\alpha}{n+2\beta}$};
        \end{tikzpicture}
        \caption{The critical exponent in the $\beta-p$ plane when $n =1,2$ and $\alpha$ is fixed.}
        \label{fig.zone1}
    \end{minipage}
    \hfill % Khoảng trống co giãn giữa 2 cột (TUYỆT ĐỐI KHÔNG ĐỂ DÒNG TRỐNG TRONG CODE Ở ĐÂY)
    % ==================== HÌNH BÊN PHẢI ====================
    \begin{minipage}[b]{0.49\textwidth}
        \centering
        \begin{tikzpicture}[>=latex,xscale=1.1,scale=0.48]
        %=====================================
        \draw[->] (0,0) -- (7.5,0)node[below]{$\alpha$};
        \draw[->] (0,0) -- (0,7.5)node[left]{$p$};
        \node[below left] at(0,0){$0$};
        \node[below] at (2.5,1.0) {};

        %=====================================p
        \draw[fill] (3.5,0) circle[radius=1pt];
        \node[below] at (3.5,0) {{\scriptsize $\displaystyle\frac{n}{2}$}};
        \draw[dashed] (3.5,0)--(3.5,7);

        %=====================================q
        \draw[fill] (0,1.5) circle[radius=1pt];
        \node[left] at (0,1.5){{\scriptsize $1$}};
        \draw[dashed] (0,1.5)--(3.5,1.5);

        \draw[fill] (0,5) circle[radius=1pt];
        \node[left] at (0,5){{\scriptsize $1+\displaystyle\frac{4}{n+2\beta}$}};

        \fill[color=black!10!white] (0,1.5)--(0,5)--(3.5,3)--(3.5,1.5)--cycle;
        \fill[color=black!40!white] (0,7)--(0,5)--(3.5,3)--(3.5,7)--cycle;

        \draw[domain = 0:3.5, red,line width=1.0pt]plot(\x,{-4/7*\x+5});

        \draw[thin] (0,0)--(0,7.5);
        \draw[thin] (3.5,0)--(3.5,7);
        \draw[thin] (0,1.5)--(3.5,1.5);

        \draw[dashed] (0,3)--(3.5,3);
        \draw[fill] (0,3) circle[radius=1pt];
        \node[left] at (0,3){{\scriptsize $\displaystyle\frac{n+8}{2n+4\beta}$}};

        \draw[fill] (3.5,3) circle[radius=1.5pt];

        %\fill[color=black!10!white] (9.25,3.75)--(8.75,3.75)--(8.75,3.25)--(9.25,3.25)--cycle;
        %\node[right] at (9.4,3.5) {{\footnotesize \text{Blow-up}}};

         \fill[color=black!10!white] (0.25,8.75)--(0.75,8.75)--(0.75,8.25)--(0.25,8.25)--cycle;
        \node[right] at (0.8,8.5) {{\footnotesize \text{Blow-up}}};%\fill[color=black!40!white] (9.25,2.75)--(8.75,2.75)--(8.75,2.25)--(9.25,2.25)--cycle;
        %\node[right] at (9.4,2.5) {{\footnotesize \text{Global existence}}};

        \draw[thin, color=red,line width=1.0pt] (5,4.75)--(5.5,4.75);
        \node[right] at (5.75,4.75) {$p= 1+\frac{4-2\alpha}{n+2\beta}$};
        \end{tikzpicture}
        \caption{The critical exponent in the $\alpha-p$ plane when $n =1,2$ and $\beta$ is fixed.}
        \label{fig.zone2}
    \end{minipage}
\end{figure}

\end{remark}

\textbf{The remaining part of this paper is organized as follows:} In Section \ref{section3}, we provide the proof of global (in time) existence results for solutions to (\ref{Main.Eq.1}). We establish in Section \ref{Proof of blow-up results} the blow-up result and derive lifespan estimates for
solutions in the subcritical case as well. Finally, in Section \ref{Section4}, we derive several corollaries from the main theorems.

\section{ Global existence}\label{section3}
\subsection{Estimates for linear kernels}
To get started, we can write solutions to the linear problem $(\ref{Main.Eq.2})$ by the formula
\begin{equation*}
 u^{\rm lin}(t,x) = \varepsilon (\mathcal{K}(t,x) +\partial_t \mathcal{K}(t,x))\ast_x u_0(x) + \varepsilon \mathcal{K}(t,x) \ast_x u_1(x),
\end{equation*}
where the Fourier transform of the kernel $\mathcal{K}(t,x)$ is defined by
\begin{align*}
    \widehat{\mathcal{K}}(t,\xi) :=  \begin{cases}
        \vspace{0.3cm}\displaystyle\frac{e^{-\frac{t}{2}}\sinh{\left(t \sqrt{\frac{1}{4} -|\xi|^2}\right)}}{\sqrt{\frac{1}{4}- |\xi|^2}}  &\text{ if } |\xi| \leq \displaystyle\frac{1}{2},\\
        \displaystyle\frac{e^{-\frac{t}{2}}\sin{\left(t \sqrt{|\xi|^2-\frac{1}{4}}\right)}}{\sqrt{|\xi|^2-\frac{1}{4}}} &\text{ if } |\xi| > \displaystyle\frac{1}{2}.
    \end{cases}
\end{align*}
 Now, we are going to prove the important result in the following lemma.

\begin{lemma}\label{LinearEstimates}
    Let $n \geq 1, m \in (1,2], s \geq 0$ and $s+\beta \geq 0$.  Then, the following estimate holds for $j = 0,1$ and $t > 0$:
    \begin{align*}
        \| \partial_t^j \mathcal{K}(t,x)\ast_x \varphi(x)\|_{\dot{H}^s} 
        &\lesssim (1+t)^{-\frac{n}{2}(\frac{1}{m}-\frac{1}{2})-\frac{s+\beta}{2}-j} \|\varphi\|_{\dot{H}_m^{-\beta} \cap H^{s+j-1}}.
        \end{align*}
\end{lemma}
\begin{proof}
From the definition of the kernel $\mathcal{K}(t,x)$, we can get some pointwise estimates in the Fourier space, namely,
$$
|\partial_t^j \widehat{\mathcal{K}}(t,\xi)|\lesssim \begin{cases}
\mathrm{e}^{-ct}+|\xi|^{2j}\mathrm{e}^{-c|\xi|^2t}&\text{ for } |\xi|< \varepsilon^*,\\
\mathrm{e}^{-ct}&\text{ for } \varepsilon^* \leq|\xi|\leq N,\\
|\xi|^{j-1}\mathrm{e}^{-ct}&\text{ for } |\xi|> N,
\end{cases}
$$
with suitable constants $c>0,$  $\varepsilon^* \ll 1$ and $ N \gg 1$. Let $\chi_k= \chi_k(r)$ with $k\in\{\rm L,H\}$ be smooth cut-off functions having the following properties:
\begin{align*}
&\chi_{\rm L}(r)=
\begin{cases}
1 &\quad \text{ if }r\le \varepsilon^*/2 \\
0 &\quad \text{ if }r\ge \varepsilon^*
\end{cases}
\text{ and } \qquad
\chi_{\rm H}(r)= 1 -\chi_{\rm L}(r).
\end{align*}
Using Parseval's formula, we obtain
\begin{align*}
\|\partial_t^j \mathcal{K}(t,x)\ast_x \varphi(x)\|_{\dot{H}^s} =& \||\xi|^s \partial_t^j \widehat{\mathcal{K}}(t,\xi) \widehat{\varphi}(\xi)\|_{L^2} \\
\leq & \|\chi_{\rm L}(|\xi|)|\xi|^s \partial_t^j \widehat{\mathcal{K}}(t,\xi) \widehat{\varphi}(\xi)\|_{L^2} + \| \chi_{\rm H}(|\xi|)|\xi|^s \partial_t^j \widehat{\mathcal{K}}(t,\xi) \widehat{\varphi}(\xi)\|_{L^2}.  
\end{align*}
Concerning the small frequencies part,
applying H\"{o}lder's inequality with $1/m_0 + 1/m' = 1/2$ we gain
\begin{align*}
    \big\|\chi_{\rm L}(|\xi|)|\xi|^{s}\partial_t^j\widehat{\mathcal{K}}(t,\xi) \widehat{\varphi}(\xi)\big\|^2_{L^2} 
     &\leq \big\|\chi_{\rm L}(|\xi|)|\xi|^{s+\beta}\partial_t^j\widehat{\mathcal{K}}(t,\xi)\big\|_{L^{m_0}}^2 \||\xi|^{-\beta}\widehat{\varphi}(\xi)\|_{L^{m'}}^2.
\end{align*}
Thanks to the Hausdorff-Young's inequality, it holds $\||\xi|^{-\beta}\widehat{\varphi}(\xi)\|_{L^{m'}} \leq \|\varphi\|_{\dot{H}_m^{-\beta}}$ with $1/m' + 1/m = 1$  and $m \in (1,2]$. As a result, we obtain
\begin{align*}
     \big\|\chi_{\rm L}(|\xi|)|\xi|^{s}\partial_t^j\widehat{\mathcal{K}}(t,\xi) \widehat{\varphi}(\xi)\big\|_{L^2} &\lesssim \left(\big\|\chi_{\rm L}(|\xi|)|\xi|^{s+\beta+2j} e^{-c|\xi|^{2}(t+1)}\big\|_{L^{m_0}} + e^{-ct} \|\chi_{\rm L}(|\xi|) |\xi|^{s+\beta}\|_{L^{m_0}}\right) \|\varphi\|_{\dot{H}_m^{-\beta}}\\
     &\lesssim (1+t)^{-\frac{n}{2}(\frac{1}{m}-\frac{1}{2})-\frac{s+\beta}{2}-j} \|\varphi\|_{\dot{H}_m^{-\beta}}.
\end{align*}
Moreover, for the large frequencies part, we have immediately the following estimate:
$$
\|\chi_{\rm H}(|\xi|)|\xi|^{s}\partial_t^j \widehat{\mathcal{K}}(t,\xi)\widehat{\varphi}(\xi)\|_{L^2} \lesssim e^{-ct}\|\chi_{\rm H}(|\xi|)|\xi|^{s+j-1} \widehat{\varphi}(\xi)\|_{L^2} \lesssim e^{-ct} \|\varphi\|_{H^{s+j-1}}.
$$ 
The proof of Lemma \ref{LinearEstimates} has been completed.    
\end{proof}

\subsection{Philosophy of our approach}
Thanks to Duhamel's principle, we have the representation formula of solutions to $(\ref{Main.Eq.1})$ as follows:
\begin{align*}
    u(t,x)
    &= u^{\rm lin}(t,x) + u^{\rm non}(t,x),
\end{align*}
where
\begin{align*}
    u^{\rm non}(t,x) := \int_0^t \mathcal{K}(t-\tau,x) \ast_x \mathcal{N}(u(\tau,x)) d\tau.
\end{align*}
Under the assumptions of Theorem \ref{Theorem1}, for $T>0$ we define the following function space:
\begin{align*}
    X(T) :=  \mathcal{C}([0, T), H^1),
\end{align*}
with the norm
\begin{align*}
    \|\varphi\|_{X(T)} &:= \sup _{t \in [0,T]} \bigg\{(1+t)^{\frac{\beta}{2}}\|\varphi(t,\cdot)\|_{L^2} + (1+t)^{\frac{1+\beta}{2}} \| \nabla\varphi(t,\cdot)\|_{L^2} \bigg\}
\end{align*}
and the closed ball
     \begin{align*}
         X(T, K) := \{ \varphi \in X(T), \|\varphi\| \leq K\}
     \end{align*}
     for $K > 0$.
We define the following operator $\Phi$ on the space $X(T)$:
\begin{align*}
   \Phi[u](t,x) := u^{\rm lin}(t,x) + u^{\rm non}(t,x).
\end{align*}
The crux of our proof relies on a couple of the following inequalities for all $u, v \in X(T)$:
    \begin{align}
        \|\Phi[u]\|_{X(T)} &\leq C_1 \varepsilon \|(u_0, u_1)\|_{\mathcal{D}(\beta)} + C_1\|u\|_{X(T)}^p,  \label{Es.Pro2.1}\\
        \|\Phi[u]-\Phi[v]\|_{X(T)} &\leq C_2 \|u-v\|_{X(T)}\left(\|u\|_{X(T)}^{p-1}+\|v\|_{X(T)}^{p-1}\right) \label{Es.Pro2.2}.
    \end{align}
 Then, if we denote that $M$ and $\varepsilon_0 > 0$ satisfy
     \begin{align*}
         M:=  2C_1\|(u_0, u_1)\|_{\mathcal{D}(\beta)} \quad\text{ and }\quad \max\{C_1, 2C_2\} M^{p-1} \varepsilon_0^{p-1} < \frac{1}{2},
     \end{align*}
      $\Phi$ is a contraction mapping on $X(T, M\varepsilon)$ for $\varepsilon \in (0, \varepsilon_0]$. Therefore, by the Banach fixed point theorem, we obtain a unique solution $u^* = \Phi[u^*] \in X(T, M\varepsilon)$ for all $T > 0$. Finally, since $T$ is arbitrary, we conclude that $u^* \in X(\infty, M\varepsilon)$.

\vspace{0.2cm}
To end this subsection, we recall two useful inequalities that will be applied later.
\begin{proposition}[Hardy-Littlewood-Sobolev inequality, see \cite{Lieb1983}]\label{HLS_inequality}
    Let $0 \leq  \beta < n$ and $1 < \theta_1 \leq \theta_2 < \infty$ such that $1/\theta_2 = 1/\theta_1-\beta/n$. Then, there exists a constant $C$ depending only on $\theta_1$ such that 
    \begin{align*}
        \|\varphi\|_{\dot{H}^{-\beta}_{\theta_2}} \leq C \|\varphi\|_{L^{\theta_1}}.
    \end{align*}
        
    \end{proposition}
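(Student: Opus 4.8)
The plan is to reduce the statement to the classical Hardy--Littlewood--Sobolev inequality for the Riesz potential and then prove the latter by Hedberg's pointwise method. First I would dispose of the case $\gamma=0$: here $\theta_1=\theta_2$, the operator $|D|^0$ is the identity on $\mathcal{Z}'$, and the inequality holds trivially with $C=1$. So assume $0<\gamma<n$ and write $f:=\varphi$. Recalling the Fourier identity $\mathcal{F}^{-1}(|\xi|^{-\gamma})=c_{n,\gamma}|x|^{-(n-\gamma)}$ (valid in $\mathcal{Z}'=\mathcal{S}'/\mathcal{P}$), on this quotient space the operator $|D|^{-\gamma}$ coincides with the Riesz potential
\[
I_\gamma f(x)=c_{n,\gamma}\int_{\R^n}\frac{f(y)}{|x-y|^{n-\gamma}}\,\rmd y .
\]
The hypotheses $1<\theta_1<\infty$ together with $\theta_2<\infty$ force $\theta_1<n/\gamma$, which is precisely the range in which this integral converges absolutely for a.e.\ $x$ when $f\in L^{\theta_1}$, so no ambiguity in the choice of representative arises.

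Next I would run Hedberg's splitting. Fix $\rho>0$ and write, with $Mf$ the Hardy--Littlewood maximal function,
\[
|I_\gamma f(x)|\lesssim \int_{|x-y|<\rho}\frac{|f(y)|}{|x-y|^{n-\gamma}}\,\rmd y+\int_{|x-y|\ge\rho}\frac{|f(y)|}{|x-y|^{n-\gamma}}\,\rmd y .
\]
The first integral is handled by a dyadic decomposition of the ball $\{|x-y|<\rho\}$ into annuli $2^{-j-1}\rho\le|x-y|<2^{-j}\rho$, which gives a bound $\lesssim \rho^{\gamma}\,Mf(x)$ (the geometric series converging because $\gamma>0$). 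For the second integral, H\"older's inequality with exponent $\theta_1$ yields a bound $\lesssim \|f\|_{L^{\theta_1}}\bigl(\int_{|z|\ge\rho}|z|^{-(n-\gamma)\theta_1'}\,\rmd z\bigr)^{1/\theta_1'}$, and the integral converges precisely because $\theta_1<n/\gamma$, evaluating to a constant times $\rho^{\gamma-n/\theta_1}$. Balancing the two terms $\rho^{\gamma}Mf(x)$ and $\rho^{\gamma-n/\theta_1}\|f\|_{L^{\theta_1}}$ over $\rho$ produces the pointwise estimate
\[
|I_\gamma f(x)|\lesssim \bigl(Mf(x)\bigr)^{1-\gamma\theta_1/n}\,\|f\|_{L^{\theta_1}}^{\gamma\theta_1/n}=\bigl(Mf(x)\bigr)^{\theta_1/\theta_2}\,\|f\|_{L^{\theta_1}}^{1-\theta_1/\theta_2},
\]
where $1-\gamma\theta_1/n=\theta_1/\theta_2$ is just the scaling relation $1/\theta_2=1/\theta_1-\gamma/n$ multiplied by $\theta_1$.

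Finally I would raise this to the power $\theta_2$, integrate in $x$, and invoke the Hardy--Littlewood maximal inequality $\|Mf\|_{L^{\theta_1}}\lesssim\|f\|_{L^{\theta_1}}$, which is available since $\theta_1>1$:
\[
\|I_\gamma f\|_{L^{\theta_2}}^{\theta_2}\lesssim \|f\|_{L^{\theta_1}}^{\theta_2-\theta_1}\int_{\R^n}\bigl(Mf(x)\bigr)^{\theta_1}\,\rmd x\lesssim \|f\|_{L^{\theta_1}}^{\theta_2},
\]
so that $\|\varphi\|_{\dot{H}^{-\gamma}_{\theta_2}}=\|I_\gamma f\|_{L^{\theta_2}}\lesssim\|\varphi\|_{L^{\theta_1}}$ with a constant depending only on $n,\gamma,\theta_1$. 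The only genuinely delicate point is the passage from the Fourier-side definition of $\dot{H}^{-\gamma}_{\theta_2}$ on $\mathcal{Z}'=\mathcal{S}'/\mathcal{P}$ to the convolution representation by the Riesz kernel, together with the a.e.\ absolute convergence of that convolution for $f\in L^{\theta_1}$; once this identification is in place the remainder is the standard Hedberg argument. Alternatively, one may simply cite Stein, \emph{Singular Integrals}, Ch.~V, or Grafakos, \emph{Classical Fourier Analysis}, Thm.~1.2.3.
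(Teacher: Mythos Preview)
Your proof is correct and complete: the Hedberg pointwise argument you present is one of the standard routes to the Hardy--Littlewood--Sobolev inequality, and you have handled the endpoint $\gamma=0$, the identification of $|D|^{-\gamma}$ with the Riesz potential on $\mathcal{Z}'$, and the range restriction $\theta_1<n/\gamma$ carefully.

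The paper itself, however, does not prove this proposition at all: it is stated in the Appendix as a classical fact, alongside the fractional Gagliardo--Nirenberg inequality and the fractional chain rule, each of which is either cited from the literature or left without proof. So there is no ``paper's own proof'' to compare against --- your write-up simply supplies what the authors took for granted. The closing remark in your proposal (that one may alternatively cite Stein or Grafakos) is in fact exactly the spirit in which the paper treats the result.
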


\begin{proposition}\label{CKN_inquality}
    Let $n \geq 1,\, 1 \leq r < \infty, \,\, 0 \leq \gamma < n/r$.
    Then, the following inequality holds for any nonconstant function $\varphi$:
    \begin{align*}
        \||x|^{-\gamma} \varphi\|_{L^r} \lesssim \| \varphi\|_{L^2}^{1-\sigma(\gamma,r,n)} \| \nabla \varphi \|_{L^2}^{\sigma(\gamma,r,n)} 
    \end{align*}
    where
    \begin{align*}
        \sigma(\gamma,r,n) := \gamma -n\left(\frac{1}{r}-\frac{1}{2}\right) \in (0,1].
    \end{align*}
    \end{proposition}
Proposition \ref{CKN_inquality} is a direct consequence of the Caffarelli-Kohn-Nirenberg inequality (see \cite{Caffarelli1984}).  
%.........................................................
\subsection{Proof of Theorem \ref{Theorem1}} 
Following the approach introduced in the previous section, we first prove estimate (\ref{Es.Pro2.1}). Using Lemma \ref{LinearEstimates} for $j=0,1$, we immediately claim that $u^{\rm lin} \in X(T)$ and
\begin{align*}
    \|u^{\rm lin}\|_{X(T)} \lesssim \varepsilon \|(u_0, u_1)\|_{\mathcal{D}(\beta)}.
\end{align*}
Therefore, in order to conclude the estimate (\ref{Es.Pro2.1}), we need to have
\begin{align}
    \|u^{\rm non}\|_{X(T)} \lesssim \|u\|_{X(T)}^p. \label{Main.Es2}
\end{align}
   The relation $p > 1+ \max\{0,\, 2(\beta-\alpha)/n\}$ leads to the fact that there exists a parameter $\theta$ fulfilling
   \begin{align*}
       \max\bigg\{\frac{2n}{np + 2(\alpha-\beta)}, 1 \bigg\} <  \theta < 2.
   \end{align*}
   We fix $\eta$ so that
   \begin{align}
      \frac{1}{\eta} = \frac{1}{\theta}+\frac{\beta}{n},  \,\,\text{ that is, }\,\, \max\bigg\{\frac{2n}{pn+2\alpha}, 1\bigg\} < \eta <  \frac{2n}{n+2\beta}.\label{RE1}
   \end{align}
    For $k = 0,1$, applying Lemma \ref{LinearEstimates} with $m = \theta, (s,j)=(k,0)$ for $\tau \in [0, t/2)$ and $m=2$, $(s,j)=(k,0)$ for $\tau \in [t/2, t]$ to derive
    \begin{align*}
        \|\nabla^k u^{\rm non}(t,\cdot)\|_{L^2} &\lesssim \int_0^{t/2} (1+t-\tau)^{-\frac{n}{2}(\frac{1}{\theta}-\frac{1}{2})-\frac{k+\beta}{2}} \|\mathcal{N}(u(\tau,\cdot))\|_{\dot{H}_{\theta}^{-\beta}\cap L^2} d\tau\\
        &\hspace{1cm}+\int_{t/2}^t (1+t-\tau)^{-\frac{k}{2}} \|\mathcal{N}(u(\tau,\cdot))\|_{L^2} d\tau.
    \end{align*}
    Thanks to Proposition \ref{HLS_inequality} with $0 \leq \beta < n/2$ and Proposition \ref{CKN_inquality} with $\gamma = \alpha/p,\,\, 0 \leq \alpha < n/2$, we arrive at
    \begin{align}
        \|\mathcal{N}(u(\tau,\cdot))\|_{\dot{H}_{\theta}^{-\beta}} &\lesssim \|\mathcal{N}(u(\tau,\cdot))\|_{L^{\eta}} = \||\cdot|^{-\frac{\alpha}{p}} |u(\tau,\cdot)|\|_{L^{\eta p}}^p \notag\\
        &\lesssim \|u(\tau,\cdot)\|_{L^2}^{p(1-\sigma_1)} \|\nabla u(\tau,\cdot)\|_{L^2}^{p\sigma_1} \lesssim (1+\tau)^{-\frac{n}{2}(\frac{p}{2}-\frac{1}{\eta})-\frac{p\beta}{2}-\frac{\alpha}{2}} \|u\|_{X(T)}^p\notag\\
        &\lesssim (1+\tau)^{-\frac{n}{2}(\frac{p}{2}-\frac{1}{\theta})-\frac{(p-1)\beta}{2}-\frac{\alpha}{2}} \|u\|_{X(T)}^p \notag
    \end{align}
    and
    \begin{align*}
        \|\mathcal{N}(u(\tau,\cdot))\|_{L^2} &= \| |\cdot|^{-\frac{\alpha}{p}} |u(\tau,\cdot)|\|_{L^{2p}}^p \\
        &\lesssim \|u(\tau,\cdot)\|_{L^2}^{p(1-\sigma_2)} \|\nabla u(\tau,\cdot)\|_{L^2}^{p\sigma_2}\\
        &\lesssim (1+\tau)^{-\frac{n}{4}(p-1)-\frac{p\beta}{2}-\frac{\alpha}{2}} \|u\|_{X(T)}^p
    \end{align*}
    for all $t \in [0,T]$,
   provided that the following conditions are satisfied: 
    \begin{align*}
        \sigma_1 := \frac{\alpha}{p}-n\left(\frac{1}{\eta p}-\frac{1}{2}\right) \in (0, 1], \quad \sigma_2:= \frac{\alpha}{p}- n\left(\frac{1}{2p}-\frac{1}{2}\right) \in (0,1].
    \end{align*}
   It is the fact that the assumption (\ref{CKN_condition}) and the relation (\ref{RE1}) lead to them. Therefore, we arrive at

    \begin{align*}
        \|\nabla^k u^{\rm non}(t,\cdot)\|_{L^2} &\lesssim (1+t)^{-\frac{n}{2}(\frac{1}{\theta}-\frac{1}{2})-\frac{\kappa+\beta}{2}} \|u\|_{X(T)}^p\int_0^{t/2} (1+\tau)^{-\frac{n}{2}(\frac{p}{2}-\frac{1}{\theta})-\frac{(p-1)\beta}{2}-\frac{\alpha}{2}} d\tau\\
        &\qquad + (1+t)^{-\frac{n}{4}(p-1)-\frac{p\beta}{2}-\frac{\alpha}{2}} \|u\|_{X(T)}^p \int_{t/2}^t (1+t-\tau)^{-\frac{k}{2}} d\tau\\
        &=: (\mathcal{J}_1(t) + \mathcal{J}_2(t)) \|u\|_{X(T)}^p.
    \end{align*}
    For the term $\mathcal{J}_2(t)$, one can see that
    \begin{align*}
        \mathcal{J}_2(t) \lesssim (1+t)^{-\frac{k+\beta}{2}+1-\frac{n}{4}(p-1)-\frac{(p-1)\beta}{2}-\frac{\alpha}{2}}  \lesssim (1+t)^{-\frac{k+\beta}{2}},
    \end{align*}
    where we note that the condition (\ref{critical_exponent}) implies
    \begin{align*}
        1-\frac{n}{4}(p-1)-\frac{(p-1)\beta}{2}-\frac{\alpha}{2} \leq 0.
    \end{align*}
   Let us proceed with $\mathcal{J}_1(t)$ as follows:
   \begin{itemize}
   [leftmargin=*]
       \item If $$-\frac{n}{2}\left(\frac{p}{2}-\frac{1}{\theta}\right)-\frac{(p-1)\beta}{2}-\frac{\alpha}{2} \leq -1,$$ we immediately have
       \begin{align*}
           \mathcal{J}_1(t) \lesssim (1+t)^{-\frac{n}{2}(\frac{1}{\theta}-\frac{1}{2})-\frac{k+\beta}{2}} \log(e+t) \lesssim (1+t)^{-\frac{k+\beta}{2}},
       \end{align*}
       due to $\theta \in (1,2)$.
   
   \item Otherwise, one has
   \begin{align*}
       \mathcal{J}_1(t) \lesssim (1+t)^{-\frac{k+\beta}{2}+1-\frac{n}{4}(p-1)-\frac{(p-1)\beta}{2}-\frac{\alpha}{2}} \lesssim (1+t)^{-\frac{k+\beta}{2}}.
   \end{align*}
   \end{itemize}
   In summary, we obtain the following estimate:
   \begin{align*}
       \|\nabla^k u(t,\cdot)\|_{L^2} \lesssim (1+t)^{-\frac{k+\beta}{2}} \|u\|_{X(T)}^p
   \end{align*}
   for all $t \in [0,T]$. Therefore, we can conclude the estimate (\ref{Main.Es2}).

\vspace{0.3cm}
\textbf{ Next, we are going to prove the estimate (\ref{Es.Pro2.2})}. For any $u, v \in X(T)$ and similarly to the proof of the estimate (\ref{Es.Pro2.1}), we have
    \begin{align*}
       \|\nabla^k (\Phi[u](t,\cdot) - \Phi[v](t,\cdot))\|_{L^2} & \lesssim \int_0^{t/2} (1+t-\tau)^{-\frac{n}{2}(\frac{1}{\theta}-\frac{1}{2})-\frac{k+\beta}{2}} \|\mathcal{N}(u(\tau,\cdot))-\mathcal{N}(v(\tau,\cdot))\|_{\dot{H}_{\theta}^{-\beta}\cap L^2} d\tau\\
        &\quad+\int_{t/2}^t (1+t-\tau)^{-\frac{k}{2}} \|\mathcal{N}(u(\tau,\cdot))-\mathcal{N}(v(\tau,\cdot))\|_{L^2} d\tau.
    \end{align*}
     By the fact that
     \begin{align*}
         \left|\mathcal{N}(u(\tau,\cdot)) - \mathcal{N}(v(\tau,\cdot))\right| \lesssim |\cdot|^{-\frac{\alpha}{p}}|u(\tau,\cdot)-v(\tau,\cdot)|\left(|\cdot|^{-\frac{\alpha}{p}}|u(\tau,\cdot)| + |\cdot|^{-\frac{\alpha}{p}}|v(\tau,\cdot)|\right)^{p-1}
     \end{align*}
and H\"older's inequality we arrive at
\begin{align*}
    &\|\mathcal{N}(u(\tau,\cdot))-\mathcal{N}(v(\tau,\cdot))\|_{\dot{H}_{\theta}^{-\beta}} \lesssim \|\mathcal{N}(u(\tau,\cdot))-\mathcal{N}(v(\tau,\cdot))\|_{L^\eta}\\
    &\quad\lesssim \big\||\cdot|^{-\frac{\alpha}{p}}|u(\tau,\cdot) - v(\tau,\cdot)|\big\|_{L^{p\eta}} \left(\big\||\cdot|^{-\frac{\alpha}{p}}|u(\tau,\cdot)|\big\|_{L^{p\eta}}^{p-1} +  \big\||\cdot|^{-\frac{\alpha}{p}}|v(\tau,\cdot)|\big\|_{L^{p\eta}}^{p-1}\right)\\
    &\quad \lesssim (1+\tau)^{-\frac{n}{2}(\frac{p}{2}-\frac{1}{\theta})-\frac{(p-1)\beta}{2}-\frac{\alpha}{2}} \|u-v\|_{X(T)}\left(\|u\|_{X(T)}^{p-1} + \|v\|_{X(T)}^{p-1}\right)
\end{align*}
and 
\begin{align*}
    &\|\mathcal{N}(u(\tau,\cdot))-\mathcal{N}(v(\tau,\cdot))\|_{L^2} \lesssim (1+\tau)^{-\frac{n}{4}(p-1)-\frac{p\beta}{2}-\frac{\alpha}{2}} \|u-v\|_{X(T)}\left(\|u\|_{X(T)}^{p-1} + \|v\|_{X(T)}^{p-1}\right)
\end{align*}
for all $t \in [0,T]$.
From these, we can conclude the estimate (\ref{Es.Pro2.2}). Our proof is finished.

%.........................................................

\section{Blow-up result and estimates for lifespan} \label{Proof of blow-up results}

\subsection{Proof of Theorem \ref{Theorem2}}
To get started, let us introduce the following definition of weak solutions to (\ref{Main.Eq.1}).
\begin{definition} \label{defweaksolution_1}
Let $p>1, \alpha \geq 0$,  $T>0$. We say that $u \in \mathcal{C}([0, T), L^2) $ is a local weak solution to \eqref{Main.Eq.1} if for any function $\phi(t,x)= \eta(t) \varphi(x)$, where $\eta \in \mathcal{C}_0^{\infty}([0, T))$,  and for any $\varphi \in \mathcal{C}^{\infty}(\mathbb{R}^n)$ with all derivatives in $L^1 \cap L^\infty$, it holds
\begin{align*}
\int_0^T \int_{\R^n} |x|^{-\alpha} |u(t,x)|^p \phi(t,x)dxdt &= \int_0^T \int_{\R^n} (\partial_t^2 - \Delta + \partial_t)u(t,x) \phi(t,x)dxdt.
\end{align*}
If $T= \ity$, we say that $u \in \mathcal{C}([0, \infty), L^2)$ is a global weak solution to \eqref{Main.Eq.1}. 
\end{definition}

Next, let us introduce the test function $\eta= \eta(t)$ satisfying the following properties:
\begin{align*}
&1.\quad \eta \in \mathcal{C}_0^\ity([0,\ity)) \text{ and }
\eta(t)=\begin{cases}
1 &\text{ for }0 \le t \le 1/2, \\
\text{decreasing} &\text{ for }1/2\le t\le 1, \\
0 &\text{ for }t \ge 1,
\end{cases} & \nonumber \\
&2.\quad \eta^{-\frac{p'}{p}}\big(|\eta'|^{p'}+|\eta''|^{p'}\big) \text{ is bounded, }
\end{align*}
where $p'$ stands for the conjugate number of $p$. In addition, we set $\varphi(x) := \langle x \rangle^{-n-1-\alpha/(p-1)}$ for all $n \geq 1$. Let $R \geq 1$, we define the following test function:
$$ \Phi_{R}(t,x):= \eta_{R}(t) \varphi_R(x), $$
where $\eta_{R}(t):= \eta(R^{-2}t)$ and $\varphi_R(x):= \varphi(R^{-1}x)$. Moreover, we define the functional

$$ \mathcal{I}_{R}:= \int_0^{\ity}\int_{\R^n} |x|^{-\alpha}|u(t,x)|^p \Phi_{R}(t,x) dxdt= \int_{0}^{R^{2}}\int_{\mathbb{R}^n} |x|^{-\alpha}|u(t,x)|^p \Phi_{R}(t,x) dxdt. $$
 We assume that $u \in \mathcal{C}([0, \infty), L^2)$ is a global weak solution to (\ref{Main.Eq.1}) in the sense of Definition \ref{defweaksolution_1}. Then, we replace the function $\phi(t,x)$ by $\Phi_{R}(t,x)$ in Definition \ref{defweaksolution_1} and carry out integration by parts to derive
    \begin{align}
    &\int_0^{\infty}\int_{\mathbb{R}^n} |x|^{-\frac{\alpha}{p}} |u(t,x)|^p \Phi_{R}(t, x) dxdt +\varepsilon\int_{\mathbb{R}^n} (u_0(x)+u_1(x))\varphi_R(x)dx\notag \notag\\
    &\hspace{1cm}= \int_0^{\infty} \int_{\mathbb{R}^n} u(t,x) (\partial_t^2 -\partial_t-\Delta) \Phi_{R}(t,x) dx dt.\label{equation1.3.2}
\end{align}
Applying H\"older's inequality with $1/p + 1/p' = 1$  and using the change of variables $\Tilde{t} = R^{-2} t$, $\Tilde{x} = R^{-1} x$, we may estimate the right hand-side of (\ref{equation1.3.2}) as follows:
    \begin{align*}
        &\int_0^{\infty} \int_{\mathbb{R}^n} \left|u(t,x) (\partial_t^2 -\partial_t-\Delta)\big(\eta_R(t) \varphi_R(x)\big)\right| dxdt\\
        &\hspace{0.5cm} \leq \left(\int_0^\infty \int_{\mathbb{R}^n} |x|^{-\alpha}|u(t,x)|^p \Phi_R(t,x)dxdt\right)^{\frac{1}{p}}\\
        &\hspace{1cm}\times\left(\int_0^{\infty}\int_{\mathbb{R}^n}|x|^{\frac{\alpha p'}{p}}\left|\eta_R(t)\varphi_{R}(x)\right|^{-\frac{p'}{p}} \left|(\partial_t^2 -\partial_t-\Delta)\big(\eta_R(t) \varphi_R(x)\big)\right|^{p'} dxdt\right)^{\frac{1}{p'}} \\
        &\hspace{0.5cm}\leq C \mathcal{I}_R^{\frac{1}{p}} R^{\frac{n+2}{p'}+\frac{\alpha}{p}-2},
    \end{align*}
    where one can see that $|\Delta \varphi(x)| \lesssim \langle x\rangle^{-n-3-\alpha/(p-1)}$ and $|x|^{\alpha p'/p} \varphi(x) \lesssim \langle x \rangle^{-n-1}$ for all $\alpha \geq 0$.
Thus, it follows immediately from (\ref{equation1.3.2}) that
    \begin{align*}
\varepsilon\int_{\mathbb{R}^n} (u_0(x)+u_1(x))\varphi_R(x)dx &\leq C \mathcal{I}_R^{\frac{1}{p}} R^{\frac{n+2}{p'}+\frac{\alpha}{p}-2}- \mathcal{I}_R \notag\\
&\leq C_1 R^{n+2+\frac{\alpha}{p-1}-2p'}. 
    \end{align*}
Now, we consider the set 
            \begin{align*}
                \mathcal{A}(\beta) &:= \Big\{(u_0, u_1) \text{ such that }  \,\, u_j: \mathbb{R}^n \to \mathbb{R} \text{ with } j \in \{0,1\} \text{ fulfilling } \\
                &\hspace{1cm} u_0(x) + u_1(x) \gtrsim  \langle x\rangle^{-n(\frac{1}{2}+\frac{\beta}{n})}(\log(e+|x|))^{-1}\Big\}.  
            \end{align*}
The paper \cite[page 17]{ChenReissig2023} has shown that
\begin{align*}
    \mathcal{A}(\beta) \cap \big(\dot{H}^{-\beta} \times \dot{H}^{-\beta}\big) \ne \emptyset
\end{align*}
for all $\beta \in [0, n/2)$.
 Therefore, from (\ref{condition2.1}) we obtain
    \begin{align*}
        \int_{\mathbb{R}^n} (u_0(x) +u_1(x))\varphi_R(x) dx &\geq \int_{|x| \leq R/2} \big(u_0(x) +u_1(x)\big) dx\\
        &\geq C\int_{R/4 \leq |x| \leq R/2} |x|^{-n(\frac{1}{2}+\frac{\beta}{n})} (\log(e+|x|))^{-1} dx \\
        &\geq C R^{\frac{n}{2}-\beta} (\log R)^{-1}.
    \end{align*}
    This implies 
    \begin{align*}
       C \varepsilon R^{\frac{n}{2}-\beta} (\log R)^{-1} \leq C_1 R^{n+2+\frac{\alpha}{p-1}-2p'},
    \end{align*}
    that is,
    \begin{align}\label{Eq.1}
        \varepsilon  \lesssim R^{\frac{n}{2}+\frac{\alpha}{p-1}+\beta-\frac{2}{p-1}} \log R
    \end{align}
    for all $R \geq 1$.
    Moreover, the condition (\ref{condition2.2}) implies
    \begin{align*}
         \frac{n}{2}+\frac{\alpha}{p-1}+\beta - \frac{2}{p-1} < 0.
    \end{align*}
     Hence, letting $R \to \infty$ in the right-hand side of (\ref{Eq.1}) we immediately obtain a contradiction. This completes the proof of Theorem \ref{Theorem2}.

\subsection{Estimates for lifespan}
At first, let us recall the definition of the lifespan of solutions to \eqref{Main.Eq.1}.
\begin{definition}\label{Defn_Lifespan}
The quantity $T_{\alpha,\beta}(\varepsilon)$, which is defined by
\begin{align*}
T_{\alpha, \beta}(\varepsilon):= \sup \big\{T\in (0,\ity) : &\mbox{ There exists a unique local weak solution } u\notag \\
	&\,\,\, \mbox{to \eqref{Main.Eq.1} on } [0,T) \mbox{ in the sense of Definition \ref{defweaksolution_1}} \notag\\
    &\mbox{ with a fixed parameter }\varepsilon>0\big\},
\end{align*}
is called the lifespan (or the so-called maximum existence time) of solutions to the problem \eqref{Main.Eq.1}.
\end{definition}
In this subsection, we will summarize how to get some estimates for lifespan $T_{\alpha, \beta}(\varepsilon)$ of solutions to \eqref{Main.Eq.1} in the subcritical case $1< p< p_{\rm c}(\alpha,\beta,n)$, where $n = 1,2$ and $0 \leq \alpha, \beta < n/2$. More precisely, both lower bound and upper
bound estimates for $T_{\alpha,\beta}(\varepsilon)$ are given by the next propositions.

\begin{proposition}[Lower bound of lifespan]\label{Pro3.1}
Let $n=1,2$ and $0 \leq  \alpha, \beta < n/2$. The exponent $p$ fulfills
\begin{align}\label{condition3.1}
    1 + \frac{2(\beta-\alpha)}{n} < p < p_{\rm c}(\alpha,\beta,n).
\end{align}
Moreover, we assume that $(u_0, u_1) \in \mathcal{D}(\beta)$. Then, there exists a positive constant $\varepsilon_1$ such that for any $\varepsilon \in (0, \varepsilon_1]$,  the lower bound for the lifespan $T_{\alpha, \beta}(\varepsilon)$ can be estimated as follows: 
\begin{align}
    T_{\alpha, \beta}(\varepsilon) \gtrsim \varepsilon^{-\frac{4(p-1)}{4-2\alpha-(n+2\beta)(p-1)}}. \label{Es.Pro3.1}
\end{align}
\end{proposition}

\begin{proof}
   To prove Proposition \ref{Pro3.1}, we reuse the notations from Section \ref{section3}. In the same procedure as the proof of Theorem \ref{Theorem1}, by the aid of the condition (\ref{condition3.1}), we derive the following estimates for all $u, v \in X(T, M\varepsilon)$:
    \begin{align*}
        \|\Phi[u]\|_{X(T)} &\leq C_1\varepsilon \|(u_0, u_1)\|_{\mathcal{D}(\beta)}+C_1(1+T)^{1-\frac{n}{4}(p-1)-\frac{(p-1)\beta}{2}-\frac{\alpha}{2}} \|u\|_{X(T)}^p\\
        & \leq \frac{M\varepsilon}{2} + C_1 M^p \varepsilon^p (1+T)^{1-\frac{n}{4}(p-1)-\frac{(p-1)\beta}{2}-\frac{\alpha}{2}}
    \end{align*}
    and 
    \begin{align*}
        \|\Phi[u]-\Phi[v]\|_{X(T)} &\leq C_2 (1+T)^{1-\frac{n}{4}(p-1)-\frac{(p-1)\beta}{2}-\frac{\alpha}{2}} \|u-v\|_{X(T)} \left(\|u\|_{X(T)}^{p-1} + \|v\|_{X(T)}^{p-1}\right)\\
        &\leq 2C_2 M^{p-1} \varepsilon^{p-1} (1+T)^{1-\frac{n}{4}(p-1)-\frac{(p-1)\beta}{2}-\frac{\alpha}{4}} \|u-v\|_{X(T)}.
    \end{align*}
    Now, if we assume that
\begin{align}\label{3.1.2}
    \max\{C_1, 2C_2\} M^{p-1} \varepsilon^{p-1} (1+T)^{1-\frac{n}{4}(p-1)-\frac{(p-1)\beta}{2}-\frac{\alpha}{2}} < \frac{1}{4},
\end{align}
then we may construct a unique local solution $u^* = \Phi[u^*] \in X(T , M\varepsilon)$ by an analogous argument to the proof of Theorem \ref{Theorem1}. Moreover, the following estimate holds:
    \begin{align*}
        \|u^*\|_{X(T)} < \frac{3M\varepsilon}{4}.
    \end{align*}
    Let us choose
    \begin{align*}
        T^* := \sup\left\{T \in (0, T_{\alpha, \beta}(\varepsilon)) \text{ such that } \mathcal{G}(T) := \|u^*\|_{X(T)} \leq M\varepsilon\right\}.
    \end{align*}
    If $T^*$ satisfies the inequality (\ref{3.1.2}), then from the previous estimate it follows that $\mathcal{G}(T^*) < (3M\varepsilon)/4$. Due to the fact that $\mathcal{G}(T)$ is a continuous and increasing function for any $T \in (0, T_{\alpha, \beta}(\varepsilon))$, there exists $T^0 \in (T^*, T_{\alpha, \beta}(\varepsilon))$ such that $\mathcal{G}(T^0) \leq M\varepsilon$. This contradicts to the definition of $T^*$. For this reason, one realizes
    \begin{align*}
        \max\{C_1, 2 C_2\} (1+T^*)^{1-\frac{n}{4}(p-1) -\frac{(p-1)\beta}{2}-\frac{\alpha}{2}} M^{p-1} \varepsilon^{p-1} \geq \frac{1}{4},
    \end{align*}
    that is,
    \begin{align*}
        T_{\alpha, \beta}(\varepsilon) \geq T^* \gtrsim \varepsilon^{-\frac{4(p-1)}{4-2\alpha -(n+2\beta)(p-1)}}.
    \end{align*}
     Thus, Proposition \ref{Pro3.1} has been proved.
\end{proof}

\begin{proposition}[\textbf{Upper bound of lifespan}]\label{upper-bound}
  Assume that the conditions of Theorem \ref{Theorem2} are satisfied. Then, there exists a positive constant $\varepsilon_2$ such that for any $\varepsilon \in (0, \varepsilon_2]$, the upper bound for the lifespan $T_{\alpha, \beta}(\varepsilon)$ can be estimated by
  \begin{align}\label{Es.Pro3.2}
      T_{\alpha, \beta}(\varepsilon) \lesssim \varepsilon^{-\frac{4(p-1)}{4-2\alpha-(n+2\beta)(p-1)}} (\log(\varepsilon^{-1}))^{\frac{4(p-1)}{4-2\alpha-(n+2\beta)(p-1)}}.
   \end{align}
   \end{proposition}
   \begin{proof}
     To verify the desired estimate in Proposition \ref{upper-bound}, we will repeat some steps in the proof of Theorem \ref{Theorem2} to obtain
       \begin{align*}
           \varepsilon \leq C R^{\frac{n}{2}+\beta+\frac{\alpha}{p-1} -\frac{2}{p-1} +\delta},
       \end{align*}
       for all $R \geq 1$ and $\delta$ is an arbitrarily small positive constant. Letting $R \to \sqrt{T_{\alpha, \beta}(\varepsilon)}$ in the last inequality leads to
       \begin{align*}
           T_{\alpha, \beta}(\varepsilon) \lesssim \varepsilon^{-\frac{2}{\delta(\alpha, \beta)}},
       \end{align*}
       where 
       \begin{align*}
           \delta(\alpha, \beta) := -\frac{n}{2}-\beta -\frac{\alpha}{p-1}+\frac{2}{p-1}-\delta > 0.
       \end{align*}
       Using again estimate (\ref{Eq.1}) with $R \to \sqrt{T_{\alpha, \beta}(\varepsilon)}$, one has
       \begin{align*}
           \varepsilon &\leq C (T_{\alpha, \beta}(\varepsilon))^{\frac{1}{2}(\frac{n}{2}+\beta+\frac{\alpha}{p-1}-\frac{2}{p-1})} \log(T_{\alpha, \beta}(\varepsilon)) \\
           &\leq \frac{2C_1}{\delta(\alpha, \beta)}(T_{\alpha, \beta}(\varepsilon))^{\frac{1}{2}(\frac{n}{2}+\beta+\frac{\alpha}{p-1}-\frac{2}{p-1})} \log(\varepsilon^{-1}),
       \end{align*}
       that is,
       \begin{align*}
           T_{\alpha, \beta}(\varepsilon) \lesssim \varepsilon^{-\frac{4(p-1)}{4-2\alpha-(n+2\beta)(p-1)}} (\log(\varepsilon^{-1}))^{\frac{4(p-1)}{4-2\alpha-(n+2\beta)(p-1)}}.
       \end{align*}
       Hence, the proof of Proposition \ref{upper-bound} is established.
   \end{proof}
   \begin{remark} \label{Remark3.1}
\fontshape{n}
\selectfont
    Linking the achieved estimates \eqref{Es.Pro3.1} and \eqref{Es.Pro3.2} in Propositions \ref{Pro3.1} and \ref{upper-bound} one recognizes that the lifespan estimates for solutions to \eqref{Main.Eq.1} with low dimension space in the subcritical case $1 < p< p_{\rm c}(\alpha, \beta,n)$ are determined by the following relations:
    $$\varepsilon^{-\frac{4(p-1)}{4-2\alpha-(n+2\beta)(p-1)}} \lesssim T_{\alpha, \beta}(\varepsilon) \lesssim \varepsilon^{-\frac{4(p-1)}{4-2\alpha-(n+2\beta)(p-1)}} (\log(\varepsilon^{-1}))^{\frac{4(p-1)}{4-2\alpha-(n+2\beta)(p-1)}}.$$
For this observation, we want to underline that this is an improvement in the upper bound estimate compared to  \cite[Theorem 1.8]{Ikeda2019} for $\alpha =\beta= 0$. Furthermore, we can see that the weight $|x|^{-\alpha}$ contributes to increasing the lifespan of blow-up solutions.
\end{remark}

%.........................................................
\section{Some corollaries}\label{Section4}
In this section, we will discuss the corollaries of Theorems \ref{Theorem1} and \ref{Theorem2}. Specifically, we focus on the initial data $(u_0, u_1) \in \dot{H}_m^{-\bar{\beta}} \times \dot{H}_m^{-\bar{\beta}}$ for $m \in (1,2]$ and $\bar{\beta} \geq 0$. By Proposition \ref{HLS_inequality},  it holds
\begin{align*}
    \||\nabla|^{-\bar{\beta}} \varphi\|_{\dot{H}^{-\theta(m)}}  \lesssim \| |\nabla|^{-\bar{\beta}} \varphi\|_{L^m} \text{ with } \theta(m) := n\left(\frac{1}{m}-\frac{1}{2}\right),
\end{align*}
that is, $$\dot{H}_m^{-\bar{\beta}} \hookrightarrow \dot{H}^{-\bar{\beta}-\theta(m)}.$$
In other words, $\dot{H}_m^{-\bar{\beta}} \subset \dot{H}^{-\bar{\beta}-\theta(m)}$ for all $\bar{\beta} \geq 0$. Therefore, we obtain the following corollary for the global existence (for simplicity, we only consider the cases $n =1,2$). 
\begin{corollary}\label{Corollary4.1}
Let $n=1,2$, $ m \in (1,2]$, $0 \leq \alpha < n/2$ and $0 \leq \bar{\beta} < n(m-1)/m$. We assume that the exponent $p$ satisfies  
        \begin{align*}
            p \geq 1+ \frac{m(2-\alpha)}{n+m\bar{\beta}}.
        \end{align*}
        In addition, the initial data satisfies
            $$(u_0, u_1) \in \mathcal{D}(\bar{\beta}, m) := (H^1 \cap \dot{H}_m^{-\bar{\beta}})\times (L^2 \cap \dot{H}_m^{-\bar{\beta}}).$$
       Then, there exists a constant $\varepsilon_0 > 0$ such that for any $\varepsilon \in (0, \varepsilon_0]$, the Cauchy problem \eqref{Main.Eq.1} admits a unique global (in time) solution 
        $
            u \in \mathcal{C}([0, \infty), H^1).
        $
      Furthermore, the following estimates hold for all $t > 0$:
        \begin{align*}
            \|u(t,\cdot)\|_{L^2} &\lesssim \varepsilon (1+t)^{-\frac{n}{2}(\frac{1}{m}-\frac{1}{2})+\frac{\bar{\beta}}{2}} \|(u_0,u_1)\|_{\mathcal{D}(\bar{\beta},m)},\\
            \|\nabla u(t,\cdot)\|_{L^2} &\lesssim \varepsilon (1+t)^{-\frac{n}{2}(\frac{1}{m}-\frac{1}{2})+\frac{1+\bar{\beta}}{2}} \|(u_0,u_1)\|_{\mathcal{D}(\bar{\beta}, m)}.
        \end{align*}    
\end{corollary}
\begin{proof}
    Following the approach presented above, it suffices to apply Theorem \ref{Theorem1} with $\beta = \bar{\beta} +\theta(m)$. Then, the condition (\ref{critical_exponent}) becomes
    \begin{align*}
        p \geq p_c(\alpha, \bar{\beta}+\theta(m), n) = 1+\frac{m(2-\alpha)}{n+m\bar{\beta}}
    \end{align*}
    and the data space
   $
        \mathcal{D}(\bar{\beta}, m) \subset \mathcal{D}(\bar{\beta} +\theta(m)).
    $
    Finally, $\beta \in [0, n/2)$ leads to $\bar{\beta} \in [0, n(m-1)/m)$.
\end{proof}

By a similar argument, we also arrive at the following corollary for the blow-up result.

\begin{corollary}\label{Corollary4.2}
    Let $n \geq 1, m \in (1,2], \,\, 0 \leq \alpha < 2$ and $0 \leq  \bar{\beta} < n(m-1)/m$. Consider that $(u_0, u_1) \in \dot{H}_m^{-\bar{\beta}} \times \dot{H}_m^{-\bar{\beta}} $ satisfies
        \begin{align*}
         u_0(x) + u_1(x) \gtrsim \langle x \rangle^{-n(\frac{1}{m}+\frac{\bar{\beta}}{n})} \log(e+|x|)^{-1}.  
        \end{align*}
        Moreover, the exponent $p$ satisfies the following condition:
        \begin{align*}
            1 < p < 1+\frac{m(2-\alpha)}{n+m\bar{\beta}}.
        \end{align*}
        Then, there is no global weak solution to \eqref{Main.Eq.1} in $\mathcal{C}([0, \infty), L^2)$.
\end{corollary}

From the statements of Corollaries \ref{Corollary4.1} and \ref{Corollary4.2}, we conclude that the value $$p_c(\alpha, \bar{\beta}+\theta(m), n) =1+ \frac{m(2-\alpha)}{n+m\bar{\beta}} $$ is the critical exponent for the problem (\ref{Main.Eq.1}) provided that the initial data $(u_0, u_1) \in \dot{H}_m^{-\bar{\beta}} \times \dot{H}_m^{-\bar{\beta}}$.
%==================================================================

% ------------------------------------------------------------------------

\subsection*{Acknowledgment}
This work is supported by Vietnam Ministry of Education and Training and Vietnam Institute for Advanced Study in Mathematics under grant number B2026-CTT-04.
\medskip

\noindent\textbf{Data availability statements}

\noindent My manuscript has no associated data. No new data were created during the study.
\medskip

\noindent\textbf{Conflict of interest}

\noindent The authors declare that they have no conflict of interest.

\end{document}